\newtheorem{theorem}{Theorem}[section]
\newtheorem{lemma}[theorem]{Lemma}
\newtheorem{proposition}[theorem]{Proposition}
\newcommand{\fn}[1]{\mathrm{#1}}
\newcommand{\na}[1]{\mathsf{#1}}
\newcommand{\ax}[1]{(\mathsf{#1})}
\newcommand{\mdl}[1]{\mathcal{#1}}
\newcommand{\NN}{\mathbb{N}}
\newcommand{\st}{\; : \;}
\newcommand{\concat}{\mathord{\hat{\;}}}
\newcommand{\ex}[1]{\exists #1 \;} % exists x ...
\newcommand{\fa}[1]{\forall #1 \;} % forall x ...
\newcommand{\lam}[1]{\lambda #1 \;} % lambda x ...
\newcommand{\len}{\fn{length}}
\newcommand{\overbar}[1]{\mkern 3mu\overline{\mkern-3mu#1\mkern-1mu}\mkern 1mu}
\title{Metastable convergence theorems}
\author{Jeremy Avigad, Edward Dean, and Jason Rute}
\subjclass[2010]{Primary 28A20; Secondary 03F60}
\thanks{Work by the first and third authors has been partially supported by NSF grant DMS-1068829.}
\begin{document}

\begin{abstract}
The dominated convergence theorem implies that if $(f_n)$ is a sequence of functions on a probability space taking values in the interval $[0,1]$, and $(f_n)$ converges pointwise a.e., then $(\int f_n)$ converges to the integral of the pointwise limit. Tao \cite{tao:08} has proved a quantitative version of this theorem: given a uniform bound on the rates of metastable convergence in the hypothesis, there is a bound on the rate of metastable convergence in the conclusion that is independent of the sequence $(f_n)$ and the underlying space. We prove a slight strengthening of Tao's theorem which, moreover, provides an explicit description of the second bound in terms of the first. Specifically, we show that when the first bound is given by a continuous functional, the bound in the conclusion can be computed by a recursion along the tree of unsecured sequences. We also establish a quantitative version of Egorov's theorem, and introduce a new mode of convergence related to these notions. 
\end{abstract}

\maketitle

\section{Introduction}
\label{introduction:section}

If $(a_n)$ is a nondecreasing sequence of real numbers in the interval $[0,1]$, then $(a_n)$ converges, and hence is Cauchy. Say that $r(\varepsilon)$ is a \emph{bound on the rate of convergence} of $(a_n)$ if for every $\varepsilon > 0$, $|a_n - a_{n'}| < \varepsilon$ whenever $n$ and $n'$ are greater than or equal to $r(\varepsilon)$. In general, one cannot compute a bound on the rate of convergence from the sequence itself: such a bound is not even continuous in the data, since the sequence $(a_n)$ can start out looking like a constant sequence of $0$'s and then increase to $1$ unpredictably.

But suppose that instead of a bound on the rate of convergence, we fix a function $F : \NN \to \NN$ and ask for an $m$ such that $|a_n - a_{n'}| < \varepsilon$ for every $n$ and $n'$ in the interval $[m,F(m)]$. Since the sequence $(a_n)$ cannot increase by $\varepsilon$ more than $\lceil 1 / \varepsilon \rceil$ times, at least one element of the sequence $0, F(0), F(F(0)), \ldots, F^{\lceil 1 / \varepsilon \rceil + 1}(0)$ has the desired property. Hence there is always such a value of $m$ less than or equal to $F^{\lceil 1 / \varepsilon \rceil + 1}(0)$.

Now notice that not only is this bound on $m$ easily computable from $F$ and a rational $\varepsilon > 0$, but it is, moreover, entirely independent of the sequence $(a_n)$. What has happened is that we have replaced the assertion
\[
 \fa {\varepsilon > 0} \ex m \fa {n, n' \geq m} |a_n - a_{n'}| < \varepsilon
\]
by a ``metastable'' version,
\[
 \fa {\varepsilon > 0, F} \ex m \fa {n, n' \in [m,F(m)]} |a_n - a_{n'}| < \varepsilon.
\]
The two statements are logically equivalent: an $m$ as in the first statement is sufficient for any $F$ in the second, and, conversely, if the first statement were false for some $\varepsilon > 0$ then for every $m$ we could define $F(m)$ to return a value large enough so that $[m,F(m)]$ includes a rogue pair $n, n'$. But whereas one cannot compute a bound on the $m$ in the first statement from $\varepsilon$ and $(a_n)$, one can easily compute a bound on the second $m$ that depends only on $\varepsilon$ and $F$. 

If $(a_n)$ is any sequence, say that $M(F)$ is a \emph{bound on the $\varepsilon$-metastable convergence of $(a_n)$} if the following holds:
\begin{quote}
For every function $F : \NN \to \NN$ there is an $m \leq M(F)$ such that for every $n,n' \in [m,F(m)]$, $|a_n - a_{n'}| < \varepsilon$. 
 \end{quote}
Then what we have observed amounts to the following:
\begin{itemize}
 \item There is a bound on the $\varepsilon$-metastable convergence of $(a_n)$ if and only if there is an $m$ such that $|a_n - a_{n'}| < \varepsilon$ for all $n, n' \geq m$. Hence, a sequence $(a_n)$ is Cauchy if and only if there is a bound on the $\varepsilon$-metastable convergence of $(a_n)$ for every $\varepsilon > 0$.
 \item For every $\varepsilon > 0$ the function $M(F) = F^{\lceil 1 / \varepsilon \rceil + 1}(0)$ is a bound on the $\varepsilon$-metastable convergence of any nondecreasing sequence $(a_n)$ of elements of the real interval $[0,1]$.
\end{itemize}
Thus there is a sense in which the second statement provides a quantitative, uniform version of the original convergence theorem.

This transformation is an instance of Kreisel's ``no-counterexample'' interpretation \cite{kreisel:51,kreisel:59}, which is, in turn, a special case of G\"odel's \emph{Dialectica} interpretation \cite{avigad:feferman:98,goedel:58,kohlenbach:08}. The particular example above is discussed by Kreisel \cite[page 49]{kreisel:52}. Variations on this idea have played a role in the Green-Tao proof \cite{green:tao:08} that there are arbitrarily long arithmetic progressions in the primes, and in Tao's proof \cite{tao:08} of the convergence of certain diagonal averages in ergodic theory. In these instances the Kreiselian trick takes the form of an ``energy incrementation argument''; see also \cite{tao:06} and \cite[Sections 1.3--1.4]{tao:08b}. The Birkhoff and von Neumann ergodic theorems and generalizations have also been analyzed in these terms \cite{avigad:et:al:10,kohlenbach:leustean:09,kohlenbach:10,kohlenbach:unp:e}. 

Here we are concerned with measure-theoretic facts such as the dominated convergence theorem, which relate one mode of convergence to another. Inspired by Tao \cite{tao:08}, our goal will be to show that from a suitable metastable bound on the first type of convergence, one can obtain a suitable metastable bound on the second; and that, moreover, the passage from the first to the second is uniform in the remaining data. 

For example, if $(f_n)$ is a sequence of measurable functions on a measure space $\mdl X = (X, \mdl B, \mu)$, then $(f_n)$ is said to converge \emph{almost uniformly} if for every $\lambda > 0$, there is a set $A$ with measure at most $\lambda$ such that $(f_n(x))$ converges uniformly for $x \not\in A$. This is equivalent to saying that for every $\lambda > 0$ and $\varepsilon > 0$ there is an $m$ such that $\mu(\{x \st \ex {n, n' \geq m} |f_n(x) - f_{n'}(x)| \geq \varepsilon\}) < \lambda$, since for a fixed $\lambda' > 0$ we can choose a sequence $(\varepsilon_i)$ decreasing to $0$ and then, for each $\varepsilon_i$, apply this last statement with $\lambda = \lambda'/2^{i+1}$. Thus the fact that $f_n$ converges almost uniformly can be expressed as follows:
\begin{equation}
\label{au:eq}
\fa{\lambda > 0, \varepsilon > 0} \ex m \mu(\{ x \st  \ex{n, n' \geq m} |f_n(x) - f_{n'}(x)| \geq \varepsilon \}) < \lambda.
\tag{$\mathsf{AU}$}
\end{equation}
By manipulations similar to the ones above, (\ref{au:eq}) has the following metastable equivalent:
\begin{equation}
\label{mau:eq}
\fa{\lambda > 0, \varepsilon > 0, F} \ex m \mu(\{ x \st \ex {n, n' \in [m,F(m)]} |f_n(x) - f_{n'}(x)| \geq \varepsilon\}) < \lambda. 
\tag{$\mathsf{AU^*}$}
\end{equation}
As above, say that $M(F)$ is a \emph{bound on the $\lambda$-uniform $\varepsilon$-metastable convergence of $(f_n)$} if the following holds:
\begin{quote}
 For every $F$, there is an $m \leq M(F)$ such that 
\[
\mu(\{x \st \ex {n, n' \in [m,F(m)]} |f_n(x) - f_{n'}(x)| \geq \varepsilon\}) < \lambda.
\] 
\end{quote}
In other words, fixing $\lambda$ and $\varepsilon$, $M(F)$ provides a bound on a value of $m$ asserted to exist by (\ref{mau:eq}).

Egorov's theorem asserts that if $\mdl X$ is a probability space and $(f_n)$ converges pointwise almost everywhere, then it converges almost uniformly. In Section~\ref{convergence:section}, we obtain the following quantitative version. Say that $M(F)$ is a \emph{$\lambda$-uniform bound for the $\varepsilon$-metastable pointwise convergence of $(f_n)$} if the following holds:
\begin{quote}
  For every $F : \NN \to \NN$,  
\[
\mu(\{x \st \fa {m \leq M(F)} \ex {n, n' \in [m,F(m)]} |f_n(x) - f_{n'}(x)| \geq \varepsilon\}) < \lambda.
\]
\end{quote}
In other words, for every $F$, $M(F)$ provides a uniform $\varepsilon$-metastable bound for the convergence of each sequence $(f_n(x))$ outside a set of measure at most $\lambda$. Compare this to the previous definition: if $M(F)$ is a bound on the $\lambda$-uniform $\varepsilon$-metastable convergence of $(f_n)$, then $M(F)$ provides a bound on a \emph{single $m$} that works outside a set of measure at most $\lambda$. With this terminology in place, we can state our quantitative version of Egorov's theorem: given $\varepsilon > 0$, $\lambda > \lambda' > 0$, and a $\lambda'$-uniform bound $M_1(F)$ on the $\varepsilon$-metastable pointwise convergence of $(f_n)$, there is a bound $M_2(F)$ on the $\lambda$-uniform $\varepsilon$-metastable convergence of $(f_n)$; and moreover $M_2(F)$ depends only on $\varepsilon$, $\lambda$, $\lambda'$, and $M_1(F)$, and not on the underlying probability space or the sequence $(f_n)$. In fact, we provide an explicit description of $M_2(F)$ in terms of this data, and explicit bounds on the complexity of $M_2$ when $M_1$ is a computable functional that can be defined using G\"odel's schema of primitive recursion in the finite types. The proof relies on a combinatorial lemma, presented in Section~\ref{combinatorial:section}, whose proof can be veiwed as an energy incrementation argument that is iterated along a well-founded tree.

It is easy to show that if $(f_n)$ is a sequence of functions taking values in $[0,1]$ and $(f_n)$ converges almost uniformly, then the sequence $(\int f_n)$ converges. Thus the dominated convergence theorem follows easily from Egorov's theorem in the case where $\mdl X$ is a probability space and the sequence $(f_n)$ is dominated by a constant function. In a similar way, we show in Section~\ref{convergence:section} that our quantitative version of Egorov's theorem implies a quantiative version of the dominated convergence theorem, a mild strengthening of a Theorem~A.2 of Tao \cite{tao:08}, again with an explicit description of the computation of one metastable bound from the other.

The notion of a $\lambda$-uniform bound on the $\varepsilon$-metastable pointwise convergence of a sequence gives rise to a new mode of convergence that sits properly between pointwise convergence and almost uniform convergence. In Section~\ref{new:convergence:section}, we explore the relationships between these notions.

We are grateful to Ulrich Kohlenbach and Paulo Oliva for advice and suggestions.

\section{A combinatorial fact}
\label{combinatorial:section}

This section is devoted to establishing a key combinatorial fact that underlies our quantitative convergence theorems. As a warmup, consider the following:

\begin{proposition}
\label{warmup:prop}
Let $(A_n)$ be a sequence of measurable subsets of a probability space $\mdl X = (X, \mdl B, \mu)$. Then the following are equivalent:
\begin{enumerate}
 \item There is an $M$ such that $\mu(\bigcup_{n \geq M} A_n) < \lambda$.
 \item There is an $M$ such that for every function $F(m)$, 
\[
 \mu \left(\bigcap_{m \leq M} \bigcup_{n \in [m,F(m)]} A_n \right) < \lambda.
\]
 \item There is a $\lambda' < \lambda$ such that for every $F$ there is an $M$ such that 
\[
 \mu \left(\bigcap_{m \leq M} \bigcup_{n \in [m,F(m)]} A_n \right) < \lambda'.
\]
\end{enumerate}
\end{proposition}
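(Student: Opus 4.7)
The plan is to prove the cyclic implications $(1) \Rightarrow (2) \Rightarrow (3) \Rightarrow (1)$, with the substance in $(3) \Rightarrow (1)$. The first two implications are routine. For $(1) \Rightarrow (2)$, fix $M$ from $(1)$; the intersection in $(2)$ lies inside its $m=M$ factor $\bigcup_{n \in [M, F(M)]} A_n \subseteq \bigcup_{n \geq M} A_n$, whose measure is $< \lambda$. For $(2) \Rightarrow (3)$, fix $M$ from $(2)$ and note that as each coordinate $F(m)$ grows, $\bigcup_{n \in [m, F(m)]} A_n$ increases to $B_m := \bigcup_{n \geq m} A_n$; continuity of measure then gives $\sup_F \mu(\bigcap_{m \leq M} \bigcup_{n \in [m, F(m)]} A_n) = \mu(B_M)$, and the strict bound in $(2)$ suffices to produce a $\lambda' < \lambda$ witnessing $(3)$ with the same $M$.

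For $(3) \Rightarrow (1)$ I argue the contrapositive. Assume $\mu(B_M) \geq \lambda$ for every $M$. Since the $B_M$ form a decreasing family in a finite measure space, continuity from above gives $\mu(B_\infty) \geq \lambda$, where $B_\infty := \bigcap_M B_M = \limsup_n A_n$. Given any candidate $\lambda' < \lambda$ for $(3)$, set $\varepsilon := \lambda - \lambda'$, and for each $m$ use continuity from below on the increasing family $\bigcup_{n \in [m, N]} A_n \uparrow B_m$ to pick $F(m) \geq m$ with
\[
 \mu(B_m \setminus C_m) < \varepsilon \cdot 2^{-m-1}, \qquad C_m := \bigcup_{n \in [m, F(m)]} A_n.
\]
Since $B_\infty \subseteq B_m$ for every $m$, $B_\infty \setminus \bigcap_m C_m \subseteq \bigcup_m (B_m \setminus C_m)$, so countable subadditivity gives $\mu(B_\infty \setminus \bigcap_m C_m) < \varepsilon$ and hence $\mu(\bigcap_m C_m) \geq \mu(B_\infty) - \varepsilon \geq \lambda'$. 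Since $\bigcap_{m \leq M} C_m \supseteq \bigcap_m C_m$ for every $M$, the finite intersection has measure at least $\lambda'$ no matter which $M$ is chosen, contradicting $(3)$ for this $\lambda'$.

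The main obstacle is the construction of $F$ in the last step: one has to apportion a geometrically decreasing error budget $\varepsilon \cdot 2^{-m-1}$ across the coordinates of $F$ so that a single application of countable subadditivity controls the loss on passing from the coordinate-wise approximations to the infinite intersection. Everything else is formal manipulation, and this $F$-construction is the prototype for the more elaborate tree-indexed recursion needed in the main combinatorial lemma to come.
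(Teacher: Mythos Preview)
Your $(1)\Rightarrow(2)$ matches the paper's, and your $(3)\Rightarrow(1)$ rests on the same construction: choose $F(m)$ so that $\mu(B_m\setminus C_m)<(\lambda-\lambda')/2^{m+1}$. The paper argues that direction directly rather than by contrapositive, deducing $\mu(B_M)<\lambda$ for the $M$ supplied by (3) from the inclusion
\[
B_M=\bigcap_{m\le M}B_m\subseteq\Bigl(\bigcap_{m\le M}C_m\Bigr)\cup\bigcup_{m\le M}(B_m\setminus C_m),
\]
which avoids the detour through $B_\infty=\limsup_n A_n$ and the appeal to continuity from above; but the substance is identical.

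There is, however, a genuine gap in your $(2)\Rightarrow(3)$ step. You correctly compute $\sup_F\mu\bigl(\bigcap_{m\le M}\bigcup_{n\in[m,F(m)]}A_n\bigr)=\mu(B_M)$ by continuity from below, and then assert that the strict inequality in (2) for each individual $F$ yields a $\lambda'<\lambda$ dominating this supremum. But a supremum of values each strictly below $\lambda$ need not itself be strictly below $\lambda$; your argument only gives $\mu(B_M)\le\lambda$. In fact $(2)\Rightarrow(3)$ is false as stated: on $[0,1]$ with Lebesgue measure take $A_0=\emptyset$, $A_n=[0,\tfrac12-\tfrac1{2n}]$ for $n\ge1$, and $\lambda=\tfrac12$. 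Then (2) holds with $M=0$, since every finite union $\bigcup_{n\le F(0)}A_n$ has measure strictly below $\tfrac12$, yet $\mu(B_M)=\tfrac12$ for every $M$, so both (1) and (3) fail. The paper's proof simply writes ``(2) clearly implies (3)'', so this is a defect in the statement itself rather than in your reading; it is repaired, for instance, by requiring in (2) the bound $<\lambda'$ for some fixed $\lambda'<\lambda$, or by stating the proposition as an equivalence between (1) and (3) only.
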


\begin{proof}
 (1) clearly implies (2) because 
\[
\bigcap_{m \leq M} \bigcup_{n \in [m,F(m)]} A_n \subseteq \bigcap_{m \leq M} \bigcup_{n \geq m} A_n = \bigcup_{n \geq M} A_n,
\] 
and (2) clearly implies (3). To show (3) implies (1), fix $\lambda > \lambda' > 0$ and for each $m$, let $F(m)$ be large enough so that 
\[
\mu\left(\bigcup_{n \geq m} A_n \setminus \bigcup_{n \in [m,F(m)]} A_n\right) < (\lambda - \lambda') / 2^{m+1}.
\]
By hypothesis, for this $F$, there is an $M$ such that $\mu(\bigcap_{m \leq M} \bigcup_{n \in [m,F(m)]} A_n) < \lambda'$. 
Then
\begin{multline*}
 \bigcup_{n \geq M} A_n = \bigcap_{m \leq M} \bigcup_{n \geq m} A_n \subseteq \\ \left(\bigcap_{m \leq M} \bigcup_{n \in [m,F(m)]} A_n\right) \cup \bigcup_{m \leq M} \left(\bigcup_{n \geq m} A_n \setminus \bigcup_{n \in [m,F(m)]} A_n\right),
\end{multline*}
whose measure is at most $\lambda' + \sum_{m \leq M} (\lambda - \lambda') / 2^{m+1} < \lambda$. Hence $\mu (\bigcup_{n \geq M} A_n) < \lambda$, as required.
\end{proof}

In particular, if (3) holds, there is an $n$ such that $\mu(A_n) < \lambda$. Now suppose we are given a functional $M(F)$ witnessing (3). The main result of this section, Theorem~\ref{combinatorial:thm}, shows that there is a bound on $n$ that depends only on $M(F)$, $\lambda$, and $\lambda'$. In particular, the bound is independent of $\mdl X$ and the sequence $(A_n)$. 
\begin{theorem}
\label{combinatorial:thm}
For every functional $M(F)$ and $\lambda > \lambda' > 0$, there is a value $M'$ with the following property. Suppose $(A_n)$ is a sequence of measurable subsets of a probability space $\mdl X$ with the property that for every function $F$,
\[
\mu\left(\bigcap_{m \leq M(F)} \bigcup_{n \in [m,F(m)]} A_n\right) < \lambda'.
\]
Then there is an $n \leq M'$ such that $\mu(A_n) < \lambda$.
\end{theorem}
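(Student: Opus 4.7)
The plan is to argue the contrapositive by constructing, via a recursion over a well-founded tree of finite sequences, a function $F$ whose existence contradicts the hypothesis whenever no small $A_n$ can be found. Fix $M$ and $\lambda > \lambda' > 0$ and set $\delta = \lambda - \lambda'$. Suppose toward a contradiction that $\mu(A_n) \geq \lambda$ for every $n \leq M'$, where $M'$ is the bound to be defined by the recursion. I will exhibit an $F$ taking values at most $M'$ such that
\[
\mu\left(\bigcap_{m \leq M(F)} \bigcup_{n \in [m, F(m)]} A_n\right) \geq \lambda',
\]
contradicting the stated hypothesis on $(A_n)$.

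The construction follows the template of the $(3) \Rightarrow (1)$ direction of Proposition~\ref{warmup:prop}. Recall that there $F(m)$ was chosen so that $\bigcup_{n \in [m, F(m)]} A_n$ approximates $\bigcup_{n \geq m} A_n$ to within $\delta/2^{m+1}$; the inclusion displayed in that proof then forced $\mu(\bigcup_{n \geq M(F)} A_n) \leq \lambda' + \delta = \lambda$, and in particular $\mu(A_{M(F)}) < \lambda$. The choice of $F$ there is non-uniform, using the given sequence $(A_n)$; to remove this dependence I replace the adaptive choice with a finite tree search. Starting from a canonical $F_0$ (say $F_0 \equiv 0$), at each node I compute $M_0 := M(F_0)$ and examine, for each $m \leq M_0$, whether there exists $n^* > F_0(m)$ such that adjoining $A_{n^*}$ to $\bigcup_{n \in [m, F_0(m)]} A_n$ strictly increases the measure of the overall intersection; I then branch over all such candidate pairs $(m, n^*)$, producing a child by setting $F_1(m) := n^*$.

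Termination comes from an energy-incrementation argument along each branch of the tree. The quantity $\mu\bigl(\bigcap_{m \leq M(F)} \bigcup_{n \in [m, F(m)]} A_n\bigr)$ is monotone under refinement, bounded above by $1$, and each refinement at coordinate $m$ increases it by at least a quantum of order $\delta/2^{m+1}$ (else the coordinate is already sufficiently covered and would not have been selected for branching). Any branch therefore has bounded length, and the branching is bounded at each level by the current value $M(F)$. The desired $M'$ is the supremum of the values $F(m)$ that appear anywhere on this well-founded tree, and is read off by a finite recursion on the tree whose complexity matches the bar-recursion form described in the introduction.

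The main obstacle is the mutual dependence between $F$ and $M(F)$: enlarging $F$ at some coordinate can enlarge $M(F)$, bringing new coordinates $m \in (M(F_k), M(F_{k+1})]$ into the intersection whose corresponding unions may themselves be insecure and need further refinement. This is what forces the recursion onto a tree rather than a linear sequence, and makes $M'$ grow very rapidly in $M$. Careful bookkeeping is required to show that the energy increment survives the enlargement of the index set $\{m \leq M(F)\}$ at each step and that the overall tree is genuinely well-founded, so that $M'$ is well-defined. Once this is in place, combining the energy bound with the hypothesis at the leaf function $F$ yields the promised contradiction and hence the theorem.
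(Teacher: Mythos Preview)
Your sketch has the right flavor---energy incrementation organized along a well-founded tree---but there is a genuine gap in the way you set up the recursion, and it is exactly the gap that makes the theorem nontrivial.

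Your tree is generated by branching over pairs $(m,n^*)$ where $n^*$ is chosen so that adjoining $A_{n^*}$ increases the measure of the current intersection. This makes the tree, and hence the value $M'$ you extract from it, depend on the particular sequence $(A_n)$; but the theorem demands an $M'$ that depends only on $M$, $\lambda$, and $\lambda'$. If instead you intend to branch over \emph{all} $n^* > F_0(m)$ so that the tree is independent of $(A_n)$, then the branching at each node is infinite and ``the supremum of the values $F(m)$ that appear anywhere on this tree'' need not be finite. Either way you have not produced a uniform bound. Relatedly, your claimed monotonicity of $\mu\bigl(\bigcap_{m\le M(F)}\bigcup_{n\in[m,F(m)]}A_n\bigr)$ under refinement is false in general: enlarging $F$ may enlarge $M(F)$, introducing new factors into the intersection and shrinking it. You flag this as ``the main obstacle'' and then say only that ``careful bookkeeping is required''; that bookkeeping is the entire content of the proof.

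The paper resolves both issues by organizing the recursion on a tree that depends only on $M$. First it reduces, via Lemma~\ref{combinatorial:lemma}, to a continuous $M$, so that the tree of unsecured sequences for $M$ is well-founded. It then defines $N(\sigma)$ by recursion on that tree with no reference to $(A_n)$: at a secured leaf, $N(\sigma)=M(\hat\sigma)$; at an internal node of depth $\bar m=\mathrm{length}(\sigma)$ one iterates $n_0=0$, $n_{i+1}=N(\sigma\,\hat{\;}\,n_i)$ for exactly $\lceil 2^{\bar m+1}/(\lambda-\lambda')\rceil$ steps and takes the maximum. The energy increment is carried out not on the moving intersection but on the single monotone union $\bigcup_{n\in[\bar m,n_i]}A_n$ at the current depth, and the interaction with coordinates already fixed is controlled by an explicit auxiliary invariant $R_\sigma$ that bounds how much those earlier unions can still grow. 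That invariant is precisely the ``careful bookkeeping'' your sketch omits.
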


A functional $M$ is said to be \emph{continuous} if the value of $M(F)$ depends on only finitely many values of $F$. Say that two functions $F$ and $F'$ \emph{agree up to $k$} if $F(j) = F'(j)$ for every $j \leq k$. If $M$ is continuous, a functional $k(F)$ with the property that $M(F) = M(F')$ whenever $F$ and $F'$ agree up to $k(F)$ is said to be a \emph{modulus of continuity} for $M$. 

The next lemma shows that, without loss of generality, we can assume the functional $M$ in the hypothesis of Theorem~\ref{combinatorial:thm} is continuous, because one can always replace it by a suitable continuous version, $\overbar{M}$.

\begin{lemma}
\label{combinatorial:lemma}
Given any functional $M$, there is a continuous functional $\overbar{M}$ with the following property: for every $F$, there is an $F'$ such that $\overbar{M}(F) = M(F')$ and $F$ and $F'$ agree up to $\overbar{M}(F)$.
\end{lemma}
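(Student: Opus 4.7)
The plan is to set $\overbar{M}(F)$ equal to the least $k \in \NN$ for which some $F'$ agrees with $F$ up to $k$ and satisfies $M(F') = k$. In symbols,
\[
\overbar{M}(F) = \min \{ k \in \NN \st \ex{F'} (F'(j) = F(j) \text{ for all } j \leq k) \wedge M(F') = k \}.
\]

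First I would verify that this is well-defined for every $F$. Taking $F' = F$ and $k = M(F)$ shows that the set on the right is non-empty, since $F$ trivially agrees with itself up to $k$ and $M(F') = M(F) = k$; hence its minimum exists in $\NN$. The existence clause in the conclusion of the lemma then drops out of the definition: writing $k^{*} = \overbar{M}(F)$, by construction there is some $F'$ agreeing with $F$ up to $k^{*}$ with $M(F') = k^{*} = \overbar{M}(F)$.

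For continuity, I would observe that whether a given $k$ belongs to the defining set depends only on $F(0), F(1), \ldots, F(k)$: the condition asserts the existence of a witness $F'$ whose values on $[0,k]$ are prescribed by $F$ and satisfying $M(F') = k$, a statement about the finite initial segment $F|_{[0,k]}$ alone. Hence, if $G$ agrees with $F$ up to $k^{*} = \overbar{M}(F)$, then for every $j \leq k^{*}$ the initial segments $F|_{[0,j]}$ and $G|_{[0,j]}$ coincide, so the defining property picks out exactly the same least $k = k^{*}$ when $F$ is replaced by $G$. Therefore $\overbar{M}(G) = k^{*} = \overbar{M}(F)$, which shows simultaneously that $\overbar{M}$ is continuous and that $F \mapsto \overbar{M}(F)$ serves as a modulus of continuity for itself.

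The one conceptual obstacle is choosing the right definition. A naive iterative construction — querying $M$ on progressively longer initial segments of $F$ extended by zeros, for instance — can fail to terminate when $M$ is sufficiently discontinuous, since the queried lengths may grow without bound. Taking the least witness $k$ sidesteps the issue by using the well-ordering of $\NN$ (non-effectively) rather than any iteration of $M$. Beyond this, I do not anticipate any further technical difficulty.
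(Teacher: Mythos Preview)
Your definition of $\overbar{M}$ is exactly the paper's---they write it as $\min\{M(F') : F \text{ and } F' \text{ agree up to } M(F')\}$, which is the same set of natural numbers---and your continuity argument (membership of $k$ in the defining set depends only on $F|_{[0,k]}$, hence the least element is determined by $F|_{[0,k^*]}$) is a clean reformulation of their two-inequality proof that $\overbar{M}$ is its own modulus of continuity. The approach is essentially identical.
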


\begin{proof}
Given $M$, define
\[
\overbar{M}(F) = \min \{ M(F') \st \mbox{$F$ and $F'$ agree up to $M(F')$} \}.
\]
The last set is nonempty since it contains $M(F)$ itself. Clearly $\overbar{M}(F)$ satisfies the stated condition, so we only need to show that $\overbar{M}$ is continuous. 

In fact, we claim that $\overbar{M}$ is its own modulus of continuity. To see this, suppose $F$ and $F''$ agree up to $\overbar{M}(F)$. We need to show $\overbar{M}(F) = \overbar{M}(F'')$. By the definition of $\overbar{M}$, there is an $F'$ such that $\overbar{M}(F) = M(F')$ and $F$ and $F'$ agree up to $M(F')$. But then $F''$ and $F$ agree up to $M(F')$, and so $\overbar{M}(F'') \leq M(F') = \overbar{M}(F)$.

Since $F$ and $F''$ agree up to $\overbar{M}(F)$, \emph{a fortiori}, they agree up to $\overbar{M}(F'')$. But now the symmetric argument shows that $\overbar{M}(F) \leq \overbar{M}(F'')$. So $\overbar{M}(F) = \overbar{M}(F'')$.
\end{proof}

The condition on $\overbar{M}$ imposed by Lemma~\ref{combinatorial:lemma} ensures that any sequence $(A_n)$ of a measure space $\mdl X$ satisfying 
\[
\fa {F'} \mu\left(\bigcap_{m \leq M(F')} \bigcup_{n \in [m,F'(m)]} A_n\right) < \lambda'
\]
also satisfies
\[
\fa {F} \mu\left(\bigcap_{m \leq \overbar{M}(F)} \bigcup_{n \in [m,F(m)]} A_n\right) < \lambda'
\]
and so it suffices to prove Theorem~\ref{combinatorial:thm} for $\overbar{M}$ in place of $M$. By similar machinations, we could arrange that $\overbar{M}(F) \leq \overbar{M}(G)$ whenever $F$ is pointwise less than or equal to $G$, and that $\overbar{M}$ is determined by the values it takes on nondecreasing $F$. However, we will not need these additional conveniences below. 

Notice that the passage from $M$ to $\overbar{M}$ is noneffective; in general it will not be possible to ``compute'' $\overbar{M}(F)$ from descriptions of $M$ and $F$. We will show, however, that in the case where $M$ is continuous, the $M'$ in the conclusion of Theorem~\ref{combinatorial:thm} \emph{can} be computed from a suitable description of $M$.

To explain our algorithm, we need to establish some background involving computation on well-founded trees. If $\sigma$ is a finite sequence of natural numbers, we index the elements starting with $0$ so that $\sigma = (\sigma_0, \ldots, \sigma_{\len(\sigma) - 1})$, and write $\sigma \concat n$ to denote the sequence extending $\sigma$ with an additional element $n$. If $\tau$ is another finite sequence of natural numbers, write $\sigma \subseteq \tau$ to indicate that $\sigma$ is an initial segment of $\tau$.  By a \emph{tree on $\NN$}, we mean a set $T$ of finite sequences of natural numbers that is closed under initial segments. Think of the empty sequence, $()$, as denoting the root, and the elements $\sigma\concat n$ as being the children of $\sigma$ in the tree. 

Identify functions $F$ from $\NN$ to $\NN$ with infinite sequences, and write $\sigma \subset F$ if $\sigma$ is an initial segment of $F$. A tree $T$ on $\NN$ is said to be \emph{well-founded} if it has no infinite branch, which is to say, for every function $F$ there is a $\sigma \subset F$ such that $\sigma$ is not in the tree. One can always carry out a proof by induction on a well-founded tree: if $P_\sigma$ is any property that holds outside a tree $T$ and moreover has the property that $P_\sigma$ holds whenever $P_{\sigma \concat n}$ holds for every $n$, then $P_\sigma$ holds for every $\sigma$; otherwise, one could successively extend a counterexample $\sigma$ to build an infinite branch $F$ that never leaves the tree. By the same token, one can define a function on finite sequences of natural numbers by a schema of recursion:
\[
 G(\sigma) = \left\{
  \begin{array}{ll}
    H(\sigma) & \mbox{if $\sigma$ is not in $T$} \\
    K(\sigma, \lam {n.} G(\sigma \concat n)) & \mbox{otherwise}
  \end{array}
 \right.
\]
where $\lam {n.} G(\sigma \concat n)$ denotes the function which maps $n$ to $G(\sigma \concat n)$. Using induction on $T$, one can show that $G$ is well-defined. Moreover, if $T$ and the functions $H$ and $K$ are computable, so is $G$. For example, the computation of $G$ on the empty string requires recursive calls to $G((n))$, for various $n$; these, in turn, require recursive calls to $G((n,n'))$, for various $n'$, and so on. The well-foundedness of $T$ guarantees that every branch of the computation terminates.

Now suppose $M(F)$ is a continuous functional. Say that a finite sequence $\sigma$ is \emph{unsecured} if there are $F_1, F_2$ extending $\sigma$ such that $M(F_1) \neq M(F_2)$. In words, $\sigma$ is unsecured if it does not provide sufficient information about a function $F$ to determine the value of $M$. Let $T = \{ \sigma \st \mbox{$\sigma$ is unsecured} \}$. Then it is not hard to see that $T$ is a tree, and the continuity of $M$ implies it is well-founded.

Suppose moreover that $k(F)$ is a modulus for $M$. For any finite sequence $\sigma$ of natural numbers, use $\hat \sigma$ to denote the function
\[
 \hat \sigma(n) = \left\{
  \begin{array}{ll}
  \sigma_n & \mbox{if $n < \len(\sigma)$} \\
  0 & \mbox{otherwise.}
  \end{array}
 \right.
\]
One can check that the set $T' = \{ \sigma \st \fa{\tau \subseteq \sigma} k(\hat \tau) \geq \fn{length}(\tau) \}$ is again a well-founded tree that includes $T$. In the next proof, given a continuous functional $M$, we will define a function $N(\sigma)$ by recursion on any well-founded tree that includes the tree of sequences that are unsecured for $M$. When this tree is given by a modulus of continuity, $k(F)$, as above, this amounts to the principle of \emph{bar recursion}, due to Spector \cite{spector:62} (see also \cite{avigad:feferman:98,kohlenbach:08}).

We now turn to the proof of Theorem~\ref{combinatorial:thm}.

\begin{proof}
By Lemma~\ref{combinatorial:lemma}, we can assume without loss of generality that $M$ is continuous. Fix $\lambda > \lambda' > 0$, and let $T$ be any well-founded tree that includes all the sequences that are unsecured for $M$. We will define a function $N(\sigma)$ by recursion on $T$, and simultaneously show, by induction on $T$, that $N(\sigma)$ satisfies the following property, $P_\sigma$, for every $\sigma$: whenever $\mdl X$  and $(A_n)$ satisfy
\begin{equation}
\tag{$Q_\sigma$}
\fa {F \supset \sigma} \mu\left(\bigcap_{m \leq M(F)} \bigcup_{n \in [m,F(m)]} A_n\right) < \lambda' \\
\end{equation}
and
\begin{equation}
\tag{$R_\sigma$}
\fa {m < \len(\sigma)} \mu \left(\bigcup_{n \in [m,N(\sigma)]} A_n \setminus \bigcup_{n \in [m,\sigma_m]} A_n\right) \leq (\lambda - \lambda') / 2^{m+1},
\end{equation}
there is an $n \leq N(\sigma)$ such that $\mu(A_n) < \lambda$. In that case, $N(())$ is the desired bound, since $Q_{()}$ is the desired hypothesis, and $R_{()}$ is vacuously true.

In the base case, suppose $\sigma$ is not in $T$, and hence secured for $M$. Define $N(\sigma) = M(\hat \sigma)$. To see that $N(\sigma)$ satisfies $P_\sigma$, suppose $\mdl X$ and $(A_n)$ satisfy $Q_\sigma$ and $R_\sigma$. Define $\tilde \sigma$ to be the function
\[
 \tilde \sigma(n) = \left\{
  \begin{array}{ll}
  \sigma_n & \mbox{if $n < \len(\sigma)$} \\
  N(\sigma) & \mbox{otherwise.}
  \end{array}
 \right.
\]
Since $\sigma$ is secured and $\tilde \sigma \supset \sigma$, $M(\tilde \sigma) = M(\hat \sigma) = N(\sigma)$, and $Q_\sigma$ implies
\[
\mu\left(\bigcap_{m \leq N(\sigma)} \bigcup_{n \in [m,\tilde \sigma(m)]} A_n\right) < \lambda'.
\] 
Similarly, $R_\sigma$ implies 
\[
\fa {m \leq N(\sigma)} \mu\left(\bigcup_{n \in [m,N(\sigma)]} A_n \setminus \bigcup_{n \in [m,\tilde\sigma(m)]} A_n\right) \leq (\lambda - \lambda') / 2^{m+1},
\] 
since for $m \geq \len(\sigma)$, $\tilde\sigma(m) = N(\sigma)$. We now use a calculation similar to that of Proposition~\ref{warmup:prop}, with $N(\sigma)$ now playing the role of infinity.
\begin{multline*}
 A_{N(\sigma)} = \bigcap_{m \leq N(\sigma)} \bigcup_{n \in [m,N(\sigma)]} A_n \subseteq  \\
 \left(\bigcap_{m \leq N(\sigma)} \bigcup_{n \in [m,\tilde \sigma(m)]} A_n\right) \cup \bigcup_{m \leq N(\sigma)} \left(\bigcup_{n \in [m,N(\sigma)]} A_n \setminus \bigcup_{n \in [m,\tilde \sigma(m)]} A_n\right).
\end{multline*}
As before, the measure of this set is at most $\lambda' + \sum_{m \leq M} (\lambda - \lambda') / 2^{m+1} < \lambda$, and so $N(\sigma)$ itself satisfies the conclusion of $P_\sigma$. 

In the inductive case where $\sigma$ is not in $T$, we can assume that we have already defined $N(\sigma \concat n)$ for every $n$ so that $P_{\sigma \concat n}$ is satisfied. Define the sequence $n_i$ by setting $n_0 = 0$ and $n_{i+1} = N(\sigma \concat n_i)$, set $\bar m = \len(\sigma)$, and set $N(\sigma) = \max_{i \leq \lceil 2^{\bar m + 1} / (\lambda - \lambda') \rceil} n_i$.

To show that $N(\sigma)$ satisfies $P_\sigma$, fix $\mdl X$ and $(A_n)$ satisfying $Q_\sigma$ and $R_\sigma$. We need to show that there is an $n \leq N(\sigma)$ satisfying $\mu(A_n) < \lambda$. By the definition of $N(\sigma)$, this is the same as showing that for some $i \leq {\lceil 2^{\bar m + 1} / \lambda \rceil}$, there is an $n \leq n_i$ with this property.

Start by trying $i = 1$. Suppose the conclusion fails, that is, there is no $n \leq n_1$ satisfying $\mu(A_n) < \lambda$. Since $n_1 = N(\sigma \concat n_0)$ satisfies $P_{\sigma \concat n_0}$, this implies that either $Q_{\sigma \concat n_0}$  or $R_{\sigma \concat n_0}$ fails. But we are assuming $Q_\sigma$, and that implies $Q_{\sigma \concat n_0}$, so $R_{\sigma \concat n_0}$ fails. This means that there is an $m < \len(\sigma \concat n_0) = \len(\sigma) + 1$ such that 
\[ 
\mu\left(\bigcup_{n \in [m,N(\sigma \concat n_0)]} A_n \setminus \bigcup_{n \in [m,(\sigma \concat n_0)_m]} A_n\right) > (\lambda - \lambda') / 2^{m+1}.
\] 
But our assumption of $R_\sigma$ implies that this does not hold for $m < \len(\sigma)$, since $N(\sigma\concat n_0) = n_1 \leq N(\sigma)$. So the only possibility is that it holds for $m = \bar m = \len(\sigma)$; in other words, we have 
\[
\mu\left(\bigcup_{n \in [\bar m,n_1]} A_n \setminus \bigcup_{n \in [\bar m,n_0]} A_n\right) > (\lambda - \lambda') / 2^{\bar m+1}.
\]

Now repeat this argument for $i = 2, 3, \ldots, \lceil 2^{\bar m + 1} / (\lambda - \lambda') \rceil$. If the conclusion fails each time, then for each $i$ we have
\[
\mu\left(\bigcup_{n \in [\bar m,n_i]} A_n \setminus \bigcup_{n \in [\bar m,n_{i-1}]} A_n\right) > (\lambda - \lambda') / 2^{\bar m + 1}.
\]
This implies $\mu(\bigcup_{n \in [\bar m,N(\sigma)]} A_n) > 1$, a contradiction.
\end{proof}

Notice that the value of $M'$ in the theorem depends on the values of $\lambda$, $\lambda'$, and the \emph{functional} $M$. It is therefore somewhat difficult to make sense of the question as to whether the bound computed in the proof is, in some sense, asymptotically sharp. Given $M$, $\lambda$, $\lambda'$, one \emph{can} effectively determine whether or not a putative value of $M'$ works; so given any bound, one can also compute the least value of $M'$ that satisfies the conclusion. So at issue is not whether we can compute the precise bound, but, rather, come up with a perspicuous characterization of the rate of growth. 

One can easily use recursion along fairly simple trees to define functions that grow astronomically fast. Nonetheless, there are some things we can say about the complexity of $M'$ in terms of $M$. It is well known that G\"odel's system $T$ of primitive recursive functionals of finite type can be stratified into levels $T_n$. At the bottom level, $T_1$, primitive recursion is restricted in such a way that the only \emph{functions} from natural numbers to natural numbers that are definable in the system are primitive recursive. The functionals of $T_1$ are said to be \emph{primitive recursive functionals in the sense of Kleene}, in contrast to the functionals of $T$, which are are said to be \emph{primitive recursive functionals in the sense of G\"odel} (see \cite{kleene:59, avigad:feferman:98,kohlenbach:10}). The results of Howard \cite{howard:81} show the following:

\begin{theorem}
\label{complexity:thm}
 In the previous theorem, if $M$ is definable in G\"odel's $T_n$ for some $n \geq 1$, then, as a function of $\lambda$ and $\lambda'$, $M'$ is definable in $T_{n+1}$.
\end{theorem}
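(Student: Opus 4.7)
The plan is to recognize the construction of $M'$ in the proof of Theorem~\ref{combinatorial:thm} as an instance of Spector's bar recursion and then invoke the complexity analysis of Howard~\cite{howard:81}.

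First, since every functional definable in $T_n$ is continuous, the noneffective passage from $M$ to $\overbar{M}$ provided by Lemma~\ref{combinatorial:lemma} is unnecessary when $M \in T_n$: we simply take $\overbar{M} = M$. A modulus of continuity $k$ for $M$ can be chosen so that $k$ itself is definable at level $T_n$, and then the tree $T' = \{ \sigma \st \fa{\tau \subseteq \sigma} k(\hat \tau) \geq \len(\tau) \}$ that dominates the tree of sequences unsecured for $M$ is $T_n$-decidable.

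Second, I would examine the definition of $N(\sigma)$ from the proof of Theorem~\ref{combinatorial:thm}. The base case sets $N(\sigma) = M(\hat \sigma)$, which is a $T_n$-definable function of $\sigma$. In the inductive case, $N(\sigma)$ is the maximum of the first $c$ terms of the iteration $n_0 = 0, n_{i+1} = N(\sigma \concat n_i)$, where $c = \lceil 2^{\len(\sigma)+1}/(\lambda-\lambda')\rceil$ is built from $\lambda$, $\lambda'$, and $\sigma$ using operations primitive recursive in the sense of Kleene. Hence the base and step data for the recursion are available in $T_n$, and the value at the empty sequence is precisely a bar-recursive functional of $\lambda$ and $\lambda'$ built from this $T_n$ data.

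Third, I would invoke Howard's theorem on the arithmetical complexity of the bar-recursion schema: bar recursion applied to $T_n$-definable modulus, base, and step terms yields a functional definable in $T_{n+1}$. Applied to the data identified above, this gives $N(())$ — and therefore $M'$ — as a $T_{n+1}$-definable function of $\lambda$ and $\lambda'$, as required. The main obstacle is the careful bookkeeping needed to (i) extract a $T_n$-definable modulus of continuity from an arbitrary $T_n$-definable $M$, and (ii) rewrite the recursion above so that it matches the precise bar-recursion schema for which Howard's complexity bounds are stated; once these identifications are in place the theorem follows immediately.
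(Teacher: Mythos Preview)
Your proposal is correct and follows the same approach as the paper: the paper does not give a standalone proof but simply attributes the result to Howard~\cite{howard:81} (with a pointer to \cite{kreuzer:kohlenbach:unp} for the link to the fragments $T_n$), and your write-up spells out exactly the identification needed---that the recursion defining $N(\sigma)$ is an instance of bar recursion with $T_n$-definable modulus, base, and step data, to which Howard's bounds apply.
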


\noindent (See also \cite[Section 10]{kreuzer:kohlenbach:unp}, which relates Howard's results explicitly to the fragments $T_n$.) Theorem~\ref{complexity:thm} implies that if $M$ is a primitive recursive functional in the sense of Kleene, then $M'$ is of level $T_2$ (which is to say, roughly Ackermannian). The results of Kreuzer \cite{kreuzer:unp} provide even more information:

\begin{theorem}
 In Theorem~\ref{combinatorial:thm}, if $M$ is definable in the calculus $\na{G_\infty A^\omega}$ (see, for example, \cite[Section 3]{kohlenbach:08}), then $M'$ is primitive recursive.
\end{theorem}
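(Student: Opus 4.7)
The plan is to inspect the explicit construction of $M'$ given in the proof of Theorem~\ref{combinatorial:thm} and identify it as an instance of bar recursion whose data is definable in $\na{G_\infty A^\omega}$; the result is then a direct application of Kreuzer's theorem from \cite{kreuzer:unp}, which states that bar recursion restricted to functionals from $\na{G_\infty A^\omega}$ produces only primitive recursive number-theoretic functions.

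First I would replace the noneffective passage $M \mapsto \overbar M$ used in Lemma~\ref{combinatorial:lemma} by an effective one. Since every functional definable in $\na{G_\infty A^\omega}$ is continuous, one can extract a modulus of continuity $k$ for $M$ itself inside the same calculus (this is a standard syntactic extraction that does not leave $\na{G_\infty A^\omega}$). Using $k$, one forms the well-founded tree
\[
T' = \{ \sigma \st \fa{\tau \subseteq \sigma} k(\hat\tau) \geq \len(\tau) \},
\]
whose membership predicate is also definable in $\na{G_\infty A^\omega}$, and which contains all sequences unsecured for $M$. This lets us run the recursion of Theorem~\ref{combinatorial:thm} directly on $M$ (no need to pass through $\overbar M$), along $T'$.

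Next I would verify that the recursive definition of $N(\sigma)$ in that proof fits the schema of bar recursion with parameters in $\na{G_\infty A^\omega}$. The base-case value $M(\hat\sigma)$ is in $\na{G_\infty A^\omega}$ by hypothesis. The step function in the inductive case computes the sequence $n_0 = 0$, $n_{i+1} = N(\sigma \concat n_i)$ and returns $\max_{i \leq \lceil 2^{\bar m+1}/(\lambda-\lambda')\rceil} n_i$; this is a definition by bounded iteration from the values $\lam{n.} N(\sigma \concat n)$, with the bound $\lceil 2^{\bar m+1}/(\lambda-\lambda')\rceil$ depending only on $\bar m = \len(\sigma)$, $\lambda$, $\lambda'$. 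All of this, including the stopping predicate ``$\sigma \notin T'$,'' is expressible in $\na{G_\infty A^\omega}$. Hence $M' = N(())$ is defined by bar recursion whose tree, base function, and step function are in $\na{G_\infty A^\omega}$.

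Finally I would invoke Kreuzer's result: bar recursion over a well-founded tree with such $\na{G_\infty A^\omega}$-definable parameters yields only primitive recursive functions of the remaining numerical parameters, here $\lambda$ and $\lambda'$ (rationally coded). The main technical obstacle is the first step: ensuring that the modulus of continuity, the tree $T'$, and all auxiliary gadgets needed to run the proof of Theorem~\ref{combinatorial:thm} really can be extracted within $\na{G_\infty A^\omega}$ rather than in a strictly larger fragment of $T$. Once this verification is in place, the theorem is a direct citation of the bar-recursion result of \cite{kreuzer:unp}.
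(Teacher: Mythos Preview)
The paper gives no proof of this theorem beyond the sentence introducing it, which attributes the result directly to Kreuzer~\cite{kreuzer:unp}. Your proposal is a reasonable unpacking of how that citation is meant to apply---recognizing the construction of $M'$ in the proof of Theorem~\ref{combinatorial:thm} as an instance of bar recursion with $\na{G_\infty A^\omega}$-definable data and then invoking Kreuzer's theorem---which is essentially the intended approach made explicit; you also correctly flag the one genuinely delicate point (extracting the modulus of continuity and the tree within $\na{G_\infty A^\omega}$), about which the paper says nothing.
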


It would be interesting to know whether these results can be improved. Alternatively, one can consider Theorem~\ref{combinatorial:thm} for particular functionals $M(F)$.  One can show, for example, that with $M(F)=F(0)+n$, the smallest value of $M'$ that works is roughly $n/(\lambda - \lambda')$. Using the algorithm given in the proof of Theorem~\ref{combinatorial:thm} yields the bound $M'=n \cdot \left\lceil 2/(\lambda-\lambda')\right\rceil$, but this can be improved to $n \cdot \left\lceil 1/(\lambda-\lambda')\right\rceil$ by tinkering with the values $(\lambda-\lambda')/2^{m+1}$ in the right hand side of condition $R_\sigma$. An explicit construction gives a lower bound of $n \cdot \left(\left\lceil \frac{1-\lambda'}{\lambda - \lambda'} \right\rceil- 1 \right)$.

However, even for simple functionals like $M(F)=F(F(0))+n$, the combinatorial details quickly become knotty. In this particular case our algorithm gives an $M' = m_{\lceil 2 / (\lambda - \lambda') \rceil}$, where $m_0 = n$ and $m_{i+1} = n \cdot \lceil 2^{m_i+1} / (\lambda - \lambda') \rceil$. This is an iterated exponential in $n$, where the depth of the stack depends on $\lambda - \lambda'$; but we do not know whether such a rate of growth is necessary.

\section{Metastable convergence theorems}
\label{convergence:section}

We can now prove our metastable version of Egorov's theorem.

\begin{theorem}
\label{metastable_egorov}
For every $\varepsilon > 0$, $\lambda > \lambda' > 0$, and functional $M_1(F)$, there is a functional $M_2(F)$ with the following property: for any probability space $\mdl X = (X, \mdl B, \mu)$ and sequence $(f_n)$ of measurable functions, if $M_1(F)$ is a $\lambda'$-uniform bound on the $\varepsilon$-metastable pointwise convergence of $(f_n)$, then $M_2(F)$ is a bound on the $\lambda$-uniform $\varepsilon$-metastable convergence of $(f_n)$. In other words, if for every $F_1$
\[
\mu(\{x \st \ex {m \leq M_1(F_1)} \fa {n, n' \in [m,F_1(m)]} |f_n(x) - f_{n'}(x)| < \varepsilon\}) > 1 - \lambda',  
\]
then for every $F_2$ there is an $m \leq M_2(F_2)$ such that 
\[
\mu(\{x \st \fa {n, n' \in [m,F_2(m)]} |f_n(x) - f_{n'}(x)| < \varepsilon\}) > 1 - \lambda.
\] 
\end{theorem}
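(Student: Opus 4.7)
The plan is to derive Theorem~\ref{metastable_egorov} from the combinatorial Theorem~\ref{combinatorial:thm} by constructing, for each fixed $F_2$, an auxiliary sequence of sets $(C_n)$ whose measure bounds translate directly into the desired conclusion. Given $F_2$, define
\[
C_n = \{ x \st \ex{k, k' \in [n, F_2(n)]} |f_k(x) - f_{k'}(x)| \geq \varepsilon \}.
\]
Then producing an $m \leq M_2(F_2)$ that verifies the conclusion of the theorem is exactly producing an $n \leq M_2(F_2)$ with $\mu(C_n) < \lambda$, which is the shape of the conclusion supplied by Theorem~\ref{combinatorial:thm}.

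To match its hypothesis, I need a functional $\tilde M$ such that for every $F$,
\[
\mu\left(\bigcap_{m \leq \tilde M(F)} \bigcup_{n \in [m, F(m)]} C_n\right) < \lambda'.
\]
For each $F : \NN \to \NN$, define the test function
\[
G_F(m) = \max \{ F_2(n) \st n \in [m, F(m)] \}.
\]
A direct index chase yields the key inclusion
\[
\bigcup_{n \in [m, F(m)]} C_n \subseteq \{ x \st \ex{k, k' \in [m, G_F(m)]} |f_k(x) - f_{k'}(x)| \geq \varepsilon \},
\]
since any pair $k, k'$ witnessing membership in some $C_n$ with $n \in [m, F(m)]$ satisfies $m \leq n \leq k, k' \leq F_2(n) \leq G_F(m)$. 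Rewriting the hypothesis of the theorem in complementary form---the bad set has measure $< \lambda'$---and applying it to $G_F$, together with the inclusion above, then gives the required bound with $\tilde M(F) := M_1(G_F)$.

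Applying Theorem~\ref{combinatorial:thm} to the sequence $(C_n)$, the functional $\tilde M$, and the thresholds $\lambda > \lambda'$ produces a value $M'$ depending only on $\tilde M$, $\lambda$, and $\lambda'$, hence only on $M_1$, $F_2$, $\lambda$, and $\lambda'$. Setting $M_2(F_2) := M'$ yields the required functional, and the independence of $M'$ from the underlying probability space and sequence built into Theorem~\ref{combinatorial:thm} automatically supplies the uniformity claimed here. The main obstacle is conceptual rather than technical: one must pinpoint the right test function $G_F$ that absorbs the extra inner quantification over $n$ appearing in $\bigcup_{n \in [m, F(m)]} C_n$ into a single $F_2$-style interval $[m, G_F(m)]$, and the choice $G_F(m) = \max\{F_2(n) \st n \in [m, F(m)]\}$ is essentially the minimal one that does so. Once this reduction is in place, the verification is routine bookkeeping.
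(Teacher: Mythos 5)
Your proof is correct and matches the paper's argument: you define the same auxiliary sets (your $C_n$ is the paper's $A_n$), use the same test function (your $G_F(m) = \max\{F_2(n) : n \in [m,F(m)]\}$ is the paper's $F_1$), establish the same inclusion, and invoke Theorem~\ref{combinatorial:thm} in the same way. The only differences are notational.
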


\begin{proof}
Fix $\varepsilon > 0$, $\lambda > \lambda' > 0$, and $M_1$. Given $F_2$, define 
\[
M(F) = M_1\left(\lam {m.} \max_{n \in [m,F(m)]} F_2(n)\right),
\] 
and let $M_2(F_2)$ be the value $M'$ given by Theorem~\ref{combinatorial:thm}. Let 
\[
A_n = \left\{ x \st \exists k,k'\in[n,F_{2}(n)]\,|f_{k}(x)-f_{k'}(x)|\geq\varepsilon \right\}.
\]
We wish to show $\mu(A_{n})< \lambda $ for some $m \leq M_2(F_2)$. By the definition of $M_2(F_2)$,  it is enough to show that for every function
$F(m)$, 
\[
\mu\left(\bigcap_{m\leq M(F)}\bigcup_{n\in[m,F(m)]}A_{n}\right)<\lambda'.
\]
For each $m \leq M(F)$, we have 
\begin{align*}
\bigcup_{n\in[m,F(m)]}A_n & =\bigcup_{n\in[m,F(m)]}\bigcup_{k,k'\in[n,F_{2}(n)]}\left\{ x \st |f_{k}(x)-f_{k'}(x)|\geq\varepsilon\right\} \\
 & \subseteq\bigcup_{k,k'\in[m,\max_{n \in [m,F(m)]} F_{2}(n)]}\left\{ x \st |f_{k}(x)-f_{k'}(x)|\geq\varepsilon \right\} 
\end{align*}
Taking $F_{1}(m)=\max_{n \in [m,F(m)]} F_{2}(n)$ in the hypothesis of the theorem gives the desired conclusion. 
\end{proof}

This straightforwardly yields our quantitative version of the dominated convergence theorem.

\begin{theorem}
\label{metastable_dct}	
For every $\varepsilon>0$, $\lambda > \lambda' > 0$, and $M_1(F)$, there is an $M_2(F)$ such that, for any probability space $\mdl X$ and sequence $(f_n)$ of nonnegative measurable functions dominated by the constant function $1$, if $M_1(F)$ is a
$\lambda'$-uniform bound on the $\varepsilon$-metastable pointwise convergence of $(f_n)$, then $M_2(F)$ is a bound on the
$(\varepsilon + \lambda)$-metastable convergence of $(\int f_n)$.
In other words, if for every $F$
\[
\mu(\{x \st \ex {m \leq M_1(F)} \fa {n, n' \in [m,F(m)]} |f_n(x) - f_{n'}(x)| < \varepsilon\}) > 1 - \lambda',  
\]
then for every $F$ there is an $m \leq M_2(F)$ such that 
\[
\fa {n, n' \in [m,F(m)]} \left| \int f_n - \int f_{n'} \right| < \varepsilon + \lambda.
\]
\end{theorem}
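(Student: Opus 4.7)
My plan is to deduce Theorem~\ref{metastable_dct} directly from Theorem~\ref{metastable_egorov} by a standard splitting of the integral. Given $\varepsilon$, $\lambda > \lambda' > 0$, and $M_1$, I would apply Theorem~\ref{metastable_egorov} with these same parameters to obtain a functional $M_2^{\fn{Eg}}$ bounding the $\lambda$-uniform $\varepsilon$-metastable convergence of $(f_n)$, and then simply set $M_2(F) := M_2^{\fn{Eg}}(F)$.

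To verify that this works, fix any $F$ and let $m \leq M_2(F)$ be the value produced by Theorem~\ref{metastable_egorov}, so that the set
\[
B := \{ x \st \ex{n,n'\in[m,F(m)]} |f_n(x) - f_{n'}(x)| \geq \varepsilon \}
\]
satisfies $\mu(B) < \lambda$. For any $n, n' \in [m, F(m)]$, I would split
\[
\left| \int f_n - \int f_{n'} \right| \leq \int_{B^c} |f_n - f_{n'}| \, d\mu + \int_B |f_n - f_{n'}| \, d\mu.
\]
On $B^c$ the integrand is strictly less than $\varepsilon$ by the choice of $B$, so the first integral is at most $\varepsilon \cdot \mu(B^c) \leq \varepsilon$; on $B$ the integrand is bounded by $1$ since $(f_n)$ takes values in $[0,1]$, so the second integral is at most $\mu(B) < \lambda$. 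Adding the two estimates gives $|\int f_n - \int f_{n'}| < \varepsilon + \lambda$, which is the desired conclusion.

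There is really no obstacle here beyond bookkeeping: the quantitative Egorov theorem already supplies a single $m$ that works \emph{uniformly} for the whole window $[m, F(m)]$ outside a set of small measure, and the integrability bound on $[0,1]$-valued functions converts uniform closeness outside a small set into closeness of integrals in precisely the additive form $\varepsilon + \lambda$. The only subtle point worth emphasizing in the write-up is that the dependence of $M_2$ on the data is inherited transparently from Theorem~\ref{metastable_egorov}, so $M_2$ depends only on $\varepsilon$, $\lambda$, $\lambda'$, and $M_1$, and not on the probability space or the sequence $(f_n)$.
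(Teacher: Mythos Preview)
Your argument is correct and is essentially identical to the paper's own proof: you invoke Theorem~\ref{metastable_egorov} with the same parameters, take the resulting $m$, and split the integral over the good set (your $B^c$, the paper's $A$) and its complement to obtain the bound $\varepsilon + \lambda$. The only cosmetic difference is that you name the exceptional set rather than its complement.
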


\begin{proof}
From the hypotheses, Theorem \ref{metastable_egorov} yields an $M_2(F)$ that is
a bound on the $\lambda$-uniform $\varepsilon$-metastable convergence of $(f_n)$.  Thus, for all $F$, there is
$m\leq M_2(F)$ such that
\[ \mu\left(\{x \mid \fa {n,n'\in [m,F(m)]}
|f_n(x)-f_{n'}(x)| < \varepsilon \}\right) > 1-\lambda. 
\]
Call the set just indicated $A$.
From our choice of $\lambda$ and the definition of $A$, it follows that for all $n,n'\in [m,F(m)]$,
\begin{eqnarray*}
\left|\int f_n - \int f_{n'}\right| & \leq & \int |f_n - f_{n'}| \\
& = &  \int_A |f_n - f_{n'}| + \int_{X \setminus A} |f_n - f_{n'}| \\
& < & \varepsilon + \lambda.
\end{eqnarray*}
That is, $M_2(F)$ provides a bound on the $(\varepsilon + \lambda)$-metastable convergence of $(\int f_n)$ as desired.
\end{proof}

Theorem~\ref{metastable_dct} strengthens Tao's Theorem A.2 \cite{tao:08} in three ways. First, we formulate convergence in terms of the Cauchy criterion, rather than referring to a fixed limit, as Tao does. This is more natural in the context of metastability, and our result implies Tao's, since one can always consider a sequence $f_0, f, f_1, f, f_2, f, \ldots$ in which a fixed limit $f$ has been interleaved. Second, Tao used the stronger hypothesis that $M_1(F)$ provides a bound that works almost everywhere, rather than outside a set of measure at most $\lambda'$. Finally, and most importantly, our proof of Theorem~\ref{combinatorial:thm} provides an explicit description of the bound, $M_2(F)$.

Tao also stated his theorem for the convergence of nets indexed by the directed set $\NN \times \NN$, as was needed in his application. But as he himself noted, the extension to arbitrary countable nets is straightforward. Given any countable net $(f_i)_{i \in I}$, one can define an increasing cofinal sequence $(a_i)_{i \in \NN}$ of elements of the directed set $I$. To adapt Theorem~\ref{combinatorial:thm}, for example, suppose we are given a sequence $(A_n)$ of measurable subsets of a probability space $\mdl X$ with the property that for every function $F$,
\[
\mu\left(\bigcap_{m \leq M(F)} \bigcup_{n \in [a_m,a_{F(m)}]} A_n\right) < \lambda',
\]
where the notation $[a,b]$ denotes $\{ i \st a \leq i \leq b \}$. Define the sequence $(A'_n)_{n \in \NN}$ by $A'_n = \bigcup_{i \in [a_n,a_{n+1}]} A_i$. Then $(A'_n)$ satisfies the requirements of Theorem~\ref{combinatorial:thm}, and hence there is an $i \leq a_{M'}$ such that $\mu(A_i) < \lambda$.

Notice that the expression $\int | f_n - f_{n'}|$ in the proof of Theorem~\ref{metastable_dct} is the $L^1$ norm of $f_n - f_{n'}$. In fact, the same argument shows the following:

\begin{theorem}
\label{Lp_gen}
For every $\varepsilon>0$, $\lambda > \lambda' > 0$, and $M_1(F)$, there is an $M_2(F)$ such that, for any probability space $\mdl X$ and sequence $(f_n)$ of nonnegative measurable functions dominated by the constant function $1$, if $M_1(F)$ is a
$\lambda'$-uniform bound on the $\varepsilon$-metastable pointwise convergence of $(f_n)$, then for every $F_2$ there is an $m \leq M_2(F_2)$ such that for every $p \geq 1$,
\[
\fa {n, n' \in [m,F(m)]} \left\| f_n - f_{n'} \right\|_p < \sqrt[p]{\varepsilon^p + \lambda}.
\]
\end{theorem}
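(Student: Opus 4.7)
The plan is to copy the structure of the proof of Theorem~\ref{metastable_dct} verbatim, and replace only the final integral estimate. Specifically, I would take $M_2(F_2)$ to be exactly the functional produced by applying Theorem~\ref{metastable_egorov} to $\varepsilon$, $\lambda$, $\lambda'$, and $M_1$. The hypothesis that $M_1(F)$ is a $\lambda'$-uniform bound on the $\varepsilon$-metastable pointwise convergence of $(f_n)$ is exactly what Theorem~\ref{metastable_egorov} requires, so this $M_2$ is a bound on the $\lambda$-uniform $\varepsilon$-metastable convergence of $(f_n)$.

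Given $F_2$, I would fix $m \leq M_2(F_2)$ as supplied by this bound, and let
\[
 A = \{ x \st \fa {n, n' \in [m, F_2(m)]} |f_n(x) - f_{n'}(x)| < \varepsilon \},
\]
so that $\mu(A) > 1 - \lambda$. Then for every $p \geq 1$ and every $n, n' \in [m, F_2(m)]$, I would split
\[
 \| f_n - f_{n'} \|_p^p = \int_A |f_n - f_{n'}|^p + \int_{X \setminus A} |f_n - f_{n'}|^p.
\]
On $A$ the integrand is bounded by $\varepsilon^p$, contributing at most $\varepsilon^p \mu(A) \leq \varepsilon^p$; on $X \setminus A$, the pointwise bound $0 \leq f_n, f_{n'} \leq 1$ gives $|f_n - f_{n'}|^p \leq 1$, so this integral is at most $\mu(X \setminus A) < \lambda$. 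Adding the two pieces and taking $p$-th roots yields $\| f_n - f_{n'} \|_p < \sqrt[p]{\varepsilon^p + \lambda}$.

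There is no real obstacle here; the only thing worth emphasizing is that the choice of $m$ is completely independent of $p$, because it is extracted from the $p$-free pointwise control on $A$ provided by Theorem~\ref{metastable_egorov}. The dependence on $p$ enters only in the arithmetic of the final split, where the $\varepsilon^p$ term comes from the uniform pointwise estimate on $A$ and the $\lambda$ term comes from the trivial bound $|f_n - f_{n'}| \leq 1$ together with $\mu(X \setminus A) < \lambda$. Thus the same $m$ simultaneously witnesses the desired inequality for every $p \geq 1$.
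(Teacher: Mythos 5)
Your proposal is correct and matches the paper's intent exactly: the paper does not give a separate proof of Theorem~\ref{Lp_gen} but merely notes that ``the same argument shows'' it, and your write-up is precisely that argument with the final $L^1$ estimate replaced by the corresponding $L^p$ split over $A$ and $X \setminus A$. Your closing remark that $m$ is chosen independently of $p$ is exactly the point that makes the ``for every $p \geq 1$'' quantifier legitimate after the existential on $m$.
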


We have considered a metastable version of the dominated convergence theorem where $\mdl X$ is a probability space and the sequence $(f_n)$ is uniformly dominated by the constant function $1$. The dominated convergence theorem itself is usually stated more generally where $\mdl X$ is an arbitrary measure space, and the sequence $(f_n)$ is dominated by an arbitrary integrable function $g$. The general case can be reduced to the one we have considered, taking into account that given an integrable function $g$ and any $\delta_1, \delta_2$ greater than $0$, there is a set $A$ with finite measure such that $\int_{X \setminus A} g < \delta_1$, and a $K$ sufficiently large so that $\int_A (g - \min(g,K)) < \delta_2$. The bound $M_2$ in the conclusion, however, now depends on bounds on $K$ and the size of $A$, for certain $\delta_1$ and $\delta_2$ depending on $\varepsilon$.

\section{A new mode of convergence}
\label{new:convergence:section}

Recall that a sequence $(f_n)$ of measurable functions converges pointwise a.e.~if for almost every $x$,
\begin{equation}
\label{ae:eq}
\fa{\varepsilon > 0} \ex m \fa{n, n' \geq m} |f_n(x) - f_{n'}(x)| < \varepsilon,
\tag{$\mathsf{AE}$}
\end{equation}
and we noted in Section~\ref{introduction:section} it converges almost uniformly if
\begin{equation}
%\label{au:eq}
\fa{\lambda > 0, \varepsilon > 0} \ex m \mu(\{ x \st \ex{n, n' \geq m} |f_n(x) - f_{n'}(x)| \geq \varepsilon \}) < \lambda.
\tag{$\mathsf{AU}$}
\end{equation}
Each of these has an equivalent expression in terms of metastable convergence. Our formulation of Egorov's theorem provides yet another mode of convergence, which we will call \emph{almost uniform metastable pointwise convergence}:
\begin{multline}
\label{aum:eq}
\fa{\lambda > 0, \varepsilon > 0, F} \ex{M}  \\
\mu(\{x \st \fa{m \leq M} \ex {n, n' \in [m,F(m)]} |f_n(x) - f_{n'}(x)| \geq \varepsilon\}) < \lambda.
\tag{$\mathsf{AUM}$}
\end{multline}
In other words, $M$, as a function of $F$, provides a bound on the $\varepsilon$-metastable convergence of the sequences $(f_n(x))$ that is uniform in $x$, and valid outside a set of measure at most $\lambda$. 

Recall that if $\mdl X$ is a probability space, or if the sequence $(f_n)$ is dominated by an $L^p$ function, then a.e.~convergence and almost uniform convergence coincide. More generally, we have the following relationships between these three modes of convergence:

\begin{proposition}
\label{mode:prop}
Let $(f_{n})$ be a sequence of measurable functions on a measure space $\mdl X = (X, \mdl B, \mu)$.
\begin{enumerate}
\item $\mathsf{AU} \rightarrow \mathsf{AUM} \rightarrow \mathsf{AE}$. (Hence, if $\mdl X$ is a probability space or the sequence $(f_n)$ is dominated, the three notions coincide.)
\item If $\mu(\{x:|f_{n}(x)-f_{n'}(x)|\geq\varepsilon\})<\infty$ for all $\varepsilon>0$, $n$, and $n'$, then $\mathsf{AE}$ implies $\mathsf{AUM}$. (In particular, the conclusion holds if for some $p \geq 1$, $f_n \in L^p$ for every $n$.)
\item In general, the implications in \emph{(1)} do not reverse.
\end{enumerate}
\end{proposition}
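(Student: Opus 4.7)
The plan is to handle the three parts in sequence, using the notation $B_{m, N} = \{x \st \exists n, n' \in [m, N], |f_n(x) - f_{n'}(x)| \geq \varepsilon\}$, $B_m = \bigcup_N B_{m, N}$, and $C_\varepsilon = \bigcap_m B_m$. For part (1), the implication $\mathsf{AU} \Rightarrow \mathsf{AUM}$ is immediate: given $\lambda, \varepsilon, F$, the value $m_0$ produced by $\mathsf{AU}$ at $(\lambda, \varepsilon)$ serves as $M$ in $\mathsf{AUM}$, since any $x$ in the $\mathsf{AUM}$ bad set witnesses, at the choice $m = m_0 \leq M$, a bad pair $n, n' \in [m_0, F(m_0)] \subseteq [m_0, \infty)$ and so lies in the measure-$<\lambda$ bad set of $\mathsf{AU}$. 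For $\mathsf{AUM} \Rightarrow \mathsf{AE}$ the task reduces to $\mu(C_\varepsilon) = 0$ for each fixed $\varepsilon > 0$. Applying $\mathsf{AUM}$ with $\lambda = 1/k$ produces values $M_k$, which I take nondecreasing (this can only shrink the intersections); the sets $\bigcap_{m \leq M_k} B_{m, F(m)}$ then form a decreasing sequence whose first term has finite measure, and continuity from above yields $\mu(\bigcap_m B_{m, F(m)}) = 0$ for \emph{every} $F$. The main obstacle is to upgrade this uniform-in-$F$ statement to $\mu(C_\varepsilon) = 0$ itself, and here I use a measure-theoretic approximation. Supposing $\mu(C_\varepsilon) > 0$ and, by semi-finiteness, restricting to a subset of positive finite measure, for each $m$ I select $F(m)$ large enough that $\mu(C_\varepsilon \setminus B_{m, F(m)}) < \delta / 2^{m+1}$, which is possible since $B_{m, N} \uparrow B_m \supseteq C_\varepsilon$. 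Then $C_\varepsilon \subseteq \bigcap_m B_{m, F(m)} \cup \bigcup_m (C_\varepsilon \setminus B_{m, F(m)})$ has measure $\leq 0 + \delta$, forcing $\mu(C_\varepsilon) < \delta$ for every $\delta > 0$ and hence $\mu(C_\varepsilon) = 0$.

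Part (2) is the cleaner direction. The finiteness hypothesis makes each $B_{m, N}$, a finite union of sets of the form $\{|f_n - f_{n'}| \geq \varepsilon\}$, of finite measure; the parenthetical $L^p$ corollary follows from Chebyshev's inequality. The main argument is now direct: $\mathsf{AE}$ gives $\mu(C_\varepsilon) = 0$, so $\bigcap_m B_{m, F(m)} \subseteq C_\varepsilon$ is null, and continuity from above applied to the decreasing sets $\bigcap_{m \leq M} B_{m, F(m)}$, whose first term $B_{0, F(0)}$ has finite measure, drives $\mu(\bigcap_{m \leq M} B_{m, F(m)}) \to 0$ as $M \to \infty$. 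Any prescribed $\lambda$ is then overtaken by a suitable $M$, verifying $\mathsf{AUM}$.

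For part (3), my plan is to display two explicit counterexamples. To refute $\mathsf{AUM} \Rightarrow \mathsf{AU}$, I take $X = \mathbb{R}$ with Lebesgue measure and $f_n = \chi_{[n, n+1]}$: convergence to $0$ is pointwise, and the $\mathsf{AU}$ bad set at $\varepsilon = 1/2$ contains $[m, \infty)$ for every $m$ and thus has infinite measure; however, for any nondecreasing $F$, $B_{m, F(m)}$ is essentially $[m, F(m) + 1]$, so $\bigcap_{m \leq M} B_{m, F(m)}$ is an interval of length at most $F(0) + 1 - M$, which shrinks below any $\lambda$ for sufficiently large $M$, so $\mathsf{AUM}$ holds. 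To refute $\mathsf{AE} \Rightarrow \mathsf{AUM}$, I take $X = \NN$ with counting measure and $f_n(k) = (-1)^n$ for $n \leq k$ and $0$ otherwise. Every trajectory $(f_n(k))_n$ is eventually zero, so $\mathsf{AE}$ holds automatically; but with $\varepsilon = 1$ and $F(m) = m+1$, a direct computation gives $B_{m, m+1} = \{k \st k \geq m\}$, so $\bigcap_{m \leq M} B_{m, m+1} = \{k \st k \geq M\}$ has infinite counting measure for every $M$, violating $\mathsf{AUM}$. The failure traces back to $\mu(\{|f_n - f_{n'}| \geq 1\}) = \infty$ for $n, n'$ of opposite parity, precisely the pathology ruled out by the hypothesis of part (2).
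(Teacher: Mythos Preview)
Your proof is correct and follows essentially the same route as the paper: the implications in (1) and (2) rest on the same continuity-of-measure ideas (the paper argues (2) by contrapositive and dispatches $\mathsf{AUM}\Rightarrow\mathsf{AE}$ in a single sentence, whereas you supply considerably more detail, invoking semi-finiteness), and your counterexamples in (3) coincide with the paper's $f_n=\chi_{[n,n+1]}$ and $g_n=(-1)^n\chi_{[n,\infty)}$, the latter simply transported from $\mathbb{R}$ with Lebesgue measure to $\mathbb{N}$ with counting measure. The one shortcut you miss is that $\mathsf{AUM}$ for $\chi_{[n,n+1]}$ follows immediately from part~(2), so the direct computation is unnecessary.
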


\begin{proof}
For (1), note that $\mathsf{AU}$ is equivalent to its metastable version, $\mathsf{AU^*}$, which clearly implies $\mathsf{AUM}$. Similarly, $\mathsf{AUM}$ implies, in particular, that for almost every $x$ the sequence $(f_n(x))$ is metastably convergent, and hence convergent.

For (2), prove the contrapositive. Suppose $\mathsf{AUM}$ fails. Then there are $\varepsilon,\lambda,F$
such that for all $M$,
\[
\mu\left(\bigcap_{m\leq M}\bigcup_{n,n'\in[m,F(m)]}\left\{ x \st |f_{n}(x)-f_{n'}(x)|\geq\varepsilon\right\} \right)\geq\lambda.
\]
 By the assumption that each $\left\{ x \st |f_{n}(x)-f_{n'}(x)|\geq\varepsilon\right\} $
has finite measure, we can take the limit as $M\rightarrow\infty$
to get \[
\mu\left(\bigcap_{m}\bigcup_{n\in[m,F(m)]}\left\{ x \st |f_{n}(x)-f_{n'}(x)|\geq\varepsilon\right\} \right)\geq\lambda.
\]
Further, removing $F$ gives,
\[
\mu\left(\bigcap_{m}\bigcup_{n\geq m}\left\{ x \st |f_{n}(x)-f_{n'}(x)|\geq\varepsilon\right\} \right)\geq\lambda.
\]
Hence, $(f_{n})$ is not a.e.~Cauchy.

For (3), $f_{n}=\chi_{[n,n+1]}$ converges $\mathsf{AUM}$ by part (2), but it is easily shown that $f_{n}$ does not converge $\mathsf{AU}$.
Last, $g_{n}:=(-1)^{n}\chi_{[n,\infty)}$ converges
only $\mathsf{AE}$.
\end{proof}

There is also a non-Cauchy version of $\mathsf{AUM}$, which refers to a limit function $f$:
\begin{multline}
\label{aumm:eq}
\ex f \fa{\lambda > 0, \varepsilon > 0, F} \ex{M}  \\
\mu(\{x \st \fa{m \leq M} \ex {n \in [m,F(m)]} |f_n(x) - f(x)| \geq \varepsilon\}) < \lambda.
\tag{$\mathsf{AUM'}$}
\end{multline}

It is easy to see that $\mathsf{AUM'}$ implies $\mathsf{AUM}$, but the converse need not hold; for example, $h_{n}=\chi_{[n,\infty)}$ converges $\mathsf{AUM}$, but not $\mathsf{AUM'}$. Moreover, the analogue of Proposition~\ref{mode:prop} holds when $\mathsf{AUM}$ is replaced by $\mathsf{AUM'}$. Thus we have the following implications, 
\[
\mathsf{AU}\rightarrow\mathsf{AUM'}\rightarrow\mathsf{AUM}\rightarrow\mathsf{AE},
\]
none of which can be reversed in general. 

\section{Final comments}

As noted in Section~\ref{combinatorial:section}, it would be interesting to know the extent to which the bounds we obtain are sharp. For example, can one show that there are functionals $M$ that are primitive recursive in the sense of Kleene for which the $M'$ in Theorem~\ref{combinatorial:thm} is not primitive recursive?

When Tao \cite{tao:06} presented his quantitative version of the dominated convergence theorem, he observed that the bound $M'$ can be computed in principle.
\begin{quote}
 In practice, though, it seems remarkably hard to do; the proof of the Lebesgue dominated convergence theorem, if inspected carefully, relies implicitly on the infinite pigeonhole principle, which is notoriously hard to finitize.
\end{quote}
He went on to note that since the Lebesgue dominated convergence theorem is equivalent, in the sense of reverse mathematics, to the arithmetic comprehension axiom $\ax{ACA}$ \cite{yu:94}, the dependence of $M'$ on the parameters is likely to be ``fantastically poor.'' The dependence we have obtained is, indeed, rather poor, but it is at least explicit and comprehensible.

In fact, the axiomatic strength of the dominated convergence theorem is sensitive to the way in which it is formulated.
Elsewhere \cite{avigad:dean:rute:unp} we have shown that the formulation of the dominated convergence theorem that corresponds to Tao's quantitative version is strictly weaker than $\ax{ACA}$. It is possible, however, that the quantitative version, which quantifies over continuous functionals, is axiomatically stronger than the original. In fact, we suspect that each of Theorem~\ref{combinatorial:thm}, \ref{metastable_egorov}, and \ref{metastable_dct} is equivalent to $\ax{ACA}$. This is reminiscent of Gaspar and Kohlenbach~\cite{gaspar:kohlenbach:10}, which provides a sense in which a quantitative version of the infinitary pigeonhole principle is axiomatically stronger than the non-quantitative version.

The results here can be viewed as instances of ``proof mining,'' which aims to extract quantitative and computationally meaningful information from nonconstructive results in analysis; see \cite{kohlenbach:08} and \cite{avigad:et:al:10,kohlenbach:leustean:09,kohlenbach:10,kohlenbach:unp:e}. In particular, the passage from Proposition~\ref{warmup:prop} to Theorem~\ref{combinatorial:thm} can be seen as an instance of the general method of eliminating a choice principle in favor of bar recursion, described in \cite[Section 11.3]{kohlenbach:08}. We are grateful to Paulo Oliva for pointing this out to us.

% \bibliographystyle{plain} 
% \bibliography{master}

\end{document}